\numberwithin{equation}{section}
\newtheorem{proposition}{{\bf Proposition}}[section]
\newtheorem{lemma}{{\bf Lemma}}[section]
\newtheorem{corollary}{{\bf Corollary}}[section]
\date{}
\begin{document}
\title{\Large A note on Cauchy's formula}
\author{Naihuan Jing, Zhijun Li$^{\dagger}$}
\address{Department of Mathematics, North Carolina State University, Raleigh, NC 27695, USA}
\email{jing@ncsu.edu}
\address{School of Mathematics, Huzhou University, Huzhou, Zhejiang 313000, China}
\email{zhijun1010@163.com}
\thanks{{\scriptsize
\hskip -0.6 true cm MSC (2010): Primary: 05E05; Secondary: 17B69, 05E10, 17B65.
\newline Keywords: Schur functions, Cauchy's identity, vertex operator, charged free bosons
\newline $^\dag$ Corresponding author: zhijun1010@163.com
}}
\maketitle
\begin{abstract}
We use the correlation functions of vertex operators to give a proof of Cauchy's formula
\begin{align*}
\prod^K_{i=1}\prod^N_{j=1}(1-x_iy_j)=\sum_{\mu\subseteq [K\times N]}(-1)^{|\mu|}s_{\mu}\{x\}s_{\mu'}\{y\}.
\end{align*}
As an application of the interpretation, we obtain an expansion of $\prod^\infty_{i=1}(1-q^i)^{i-1}$ in terms of half plane partitions.
\end{abstract}

\section{introduction}
The Schur functions form a distinguished orthonormal basis in the ring of symmetric functions 
\cite{Mac1995} with a number of applications, among which the most prominent is perhaps in the representation theory of both the symmetric and general linear groups \cite{W}.
One important identity in the theory is the Cauchy formula
\begin{align}\label{e:Cauchy1}
\prod^{K}_{i=1}\prod^{N}_{j=1}\frac{1}{1-x_iy_j}=\sum_\lambda s_\lambda\{x\}s_\lambda\{y\},
\end{align}
where $s_{\lambda}\{x\}$ is the Schur function in $x_i's$ and the sum is over all partitions $\lambda$ with length $l(\lambda)\leq \min\{K,N\}$.
Foda, Wheeler and Zuparic \cite{FWZ} have used free fermions to study Schur functions and gave a physical interpretation
of the limit of \eqref{e:Cauchy1} using plane partitions, and the underlying algebraic structure is an infinite dimensional Heisenberg algebra
with central charge $1$. This is partly based on the vertex operator approach to symmetric functions \cite{Jing1991, Jing2000}.

In \cite{Kv1987,Wang1997},
charged free bosonic system provides a different Heisenberg algebra with central charge $-1$: $\mathcal{H}=\{h_n\}_{n\in\mathbb{Z}}$ with the commutation relation $[h_m,h_n]=-m\delta_{m,-n}$. On the Fock space $\mathcal V\simeq\mathbb C[h_{-1}, h_{-2}, \ldots]$ (resp. the dual space $\mathcal V^*$)
of the Heisenberg algebra $\mathcal{H}$ generated by the vacuum vector $|0\rangle$ (resp. dual vacuum $\langle 0|$),
we can introduce the fermionic field $\phi(z)$ to obtain a
base $\{|\lambda\rangle\}$ of $\mathcal{V}$ (resp. $\{\langle \lambda|\}$ of $\mathcal V^*$) that satisfies the skew-orthogonality:
\begin{align}
(|\lambda\rangle,|\mu\rangle)=\langle\lambda|\mu\rangle=(-1)^{|\lambda|}\delta_{\lambda,\mu'},
\end{align}
where $\mu'$ is the conjugate of partition $\mu$.

In this paper, we discuss an alternative approach to understand a related Cauchy identity by viewing the
Schur functions as {\it skew-orthogonal basis} of the symmetric functions. With the help of the half-vertex operator $\phi^+(x)=\exp\left(\sum^{\infty}_{n=1}\frac{h_{-n}}{n}x^n\right)$
we revisit/reprove a variant of Cauchy's formula
\begin{align}
\prod^K_{i=1}\prod^N_{j=1}(1-x_iy_j)=\sum_{\mu\subseteq [K\times N]}(-1)^{|\mu|}s_{\mu}\{x\}s_{\mu'}\{y\},
\end{align}
where the sum runs over all partitions $\mu$ with $\mu_1\leq N,~\mu^{\prime}_1\leq K$. The treatment
is completely self-contained and offers new perspective to understand the dynamic procedure of the vertex operator
action (see Prop. \ref{pro1}).

Plane partitions are two-dimensional analogues of ordinary partitions. They naturally appear in many problems of statistical physics and
quantum field theory (see \cite{Bog2005} and references therein). The well-known MacMahon generating function for plane partitions \cite{Mac1995, St1999} is
\begin{align*}
\sum_{\pi}q^{|\pi|}=\prod^{\infty}_{i=1}\frac{1}{(1-q^i)^i},
\end{align*}
where $\pi$ runs over all plane partitions. It has a physical interpretation
via the Schur process and the KP hierarchy \cite{OR2003} as well as the free fermion system \cite{FWZ}. In the same spirit, we introduce half plane partitions, which form a special class of interlacing partition chains (lower triangular part of plane partitions), and use them to give combinatorial interpretations of $\prod^K_{i=1}\prod^N_{j=1}(1-q^{i+j})$ as well as $\prod^{\infty}_{i=1}(1-q^i)^{i-1}$. One of our results is that
\begin{align}
\sum_{\lambda}(-1)^{|\lambda|}\sum_{\{\lambda\rightarrow \pi\}}\sum_{\{\lambda^{\prime}\rightarrow \check{\pi}\}}q^{|\pi|+|\check{\pi}|}=\prod^\infty_{i=1}(1-q^i)^{i-1},
\end{align}
where $\lambda\rightarrow \pi$ (resp. $\lambda^{\prime}\rightarrow \check{\pi}$) runs through all interlacing chains associated with the
half-plane partition $\pi$ (resp. $\check{\pi}$)\footnote{In the remainder, we use $\pi$ to denote a half plane partition.}.

We remark that the vertex operator $\phi(z)$ is in fact a reformulated Bernstein operator (cf. \cite{Zel, Jing1991})
for the Schur functions.
The action of the half-vertex operator $\phi^+(x)$ on Schur functions can be used to derive Macdonald's skew Schur
functions. Bernstein operator can also be formulated in plethystic manner \cite{Garsia, CT, Las, Z, FJK2016}, and another
combinatorial formulation can be found in \cite{HJS, Rosas}.


The paper is organized as follows. In section 2, we consider the charged free bosonic system and study
an infinite-dimensional Heisenberg algebra with negative central charge, which is
different from the traditional treatment (cf. \cite{Jing1991}). We then introduce the field operator $\phi(z)$ to obtain a base of the ring of symmetric functions.
Through the dynamic action of the vertex operator, we show that the Cauchy identity follows naturally. In section 3, we use half plane partitions to express $\prod^\infty_{i=1}(1-q^i)^{i-1}$ by the identities from section 2.

\section{charged free bosons and Cauchy's identities}
Let $\varphi_i, \varphi_i^*$ ($i\in\mathbb Z$) be the charged free bosons satisfying the commutation relations:
\begin{align}\label{B3}
[\varphi_{i},\varphi^{*}_{j}]=\delta_{i,-j},~~~~[\varphi_{i},\varphi_{j}]=[\varphi^{*}_{i},\varphi^{*}_{j}]=0,
\end{align}
where $[A,B]=AB-BA$ is the commutator. Their generating functions are
\begin{align}
\varphi(z)=\sum_{i\in \mathbb{Z}}\varphi_{i}z^{-i-1},~~~~\varphi^{*}(z)=\sum_{i\in \mathbb{Z}}\varphi^{*}_{i}z^{-i}.
\end{align}
\par Let $\mathcal{M}$ (resp. $\mathcal{M}^*$) be the (resp. dual) Fock space generated by the vacuum vector $|0\rangle$ (resp. $\langle0|$) defined by
\begin{align}
\varphi_i|0\rangle=\varphi^{*}_{i+1}|0\rangle=0,~~i\geq 0 ~~(\text{resp.}~\langle0|\varphi_i=\langle0|\varphi^{*}_{i+1}=0,~~i<0).
\end{align}
\par Define the bosonic operators $\displaystyle h_n=\sum^\infty_{i=-\infty}:\varphi_{-i}\varphi^*_{i+n}:$, where the normal ordering $:\ \ :$
moves the factor annihilating $|0\rangle$ to the right. Then
$\{h_n|n\in \mathbb{Z}\}$ generates the Heisenberg
algebra $\mathcal H$ with central charge $-1$ \cite[p7]{Wang1997}:
\begin{align}\label{e:Heis}
[h_m,h_n]=-m\delta_{m,-n}.
\end{align}
For completeness, we verify \eqref{e:Heis} as follows. Note that
$h_n=\sum\limits_{i\geq -n+1}\varphi_{-i}\varphi^*_{i+n}+\sum\limits_{i\leq -n}\varphi^*_{i+n}\varphi_{-i}$.
It follows from $[AB,C]=A[B,C]+[A,C]B$ that
\begin{align*}
[h_i,\varphi_j]=-\varphi_{i+j},~~~~~~~~~[h_i,\varphi^*_j]=\varphi^*_{i+j}.
\end{align*}
Then we have that

\begin{align}
\notag[h_m,h_n]=&\sum_{i\geq -n+1}[h_m,\varphi_{-i}\varphi^*_{i+n}]+\sum_{i\leq -n}[h_m,\varphi^*_{i+n}\varphi_{-i}]\\
\notag=&-\sum_{i\geq -n+1}\varphi_{-i+m}\varphi^*_{i+n}+\sum_{i\geq -n+1}\varphi_{-i}\varphi^*_{i+m+n}\\
\notag&+\sum_{i\leq -n}\varphi^*_{i+m+n}\varphi_{-i}-\sum_{i\leq -n}\varphi^*_{i+n}\varphi_{-i+m}\\
=&-\sum_{i\geq 1}\varphi_{m+n-i}\varphi^*_{i}+\sum_{i\geq m+1}\varphi_{m+n-i}\varphi^*_{i}\\
\notag&+\sum_{i\leq m}\varphi^*_{i}\varphi_{m+n-i}-\sum_{i\leq 0}\varphi^*_i\varphi_{m+n-i}\\
\notag=&\begin{cases}
-\sum\limits^m_{i=1}\varphi_{m+n-i}\varphi^*_{i}+\sum\limits^m_{i=1}\varphi^*_{i}\varphi_{m+n-i}~~~~~~~~~& \text{if}~ m\geq 0,\\
\sum\limits^0_{i=m+1}\varphi_{m+n-i}\varphi^*_{i}-\sum\limits^0_{i=m+1}\varphi^*_{i}\varphi_{m+n-i} &\text{if}~ m< 0,
\end{cases}\\
\notag=&\begin{cases}
-\sum\limits^m_{i=1}[\varphi_{m+n-i},\varphi^*_{i}]~~~~& \text{if}~ m\geq 0,\\
\sum\limits^0_{i=m+1}[\varphi_{m+n-i},\varphi^*_{i}]&\text{if}~ m< 0,
\end{cases}\\
\notag=&-m\delta_{m,-n},
\end{align}
where we have used the convention that $\sum\limits^0_{1}[\varphi_{m+n-i},\varphi^*_{i}]:=0$.

The Fock space $\mathcal{V}$, generated linearly by the left action of $\mathbb{C}[h_{-1},h_{-2},h_{-3},\dots]$ on $|0\rangle$, is a subspace of $\mathcal{M}$. Similarly, $\mathcal{M^*}$ has a subspace $\mathcal{V^*}=\langle0|\mathbb{C}[h_1,h_2,h_3,\dots]$. It is known
that $\mathcal V$ (or $\mathcal V^*$) is the unique left (or right) irreducible representation of the Heisenberg algebra $\mathcal H$.
The following is clear.
\begin{proposition}\label{e:inv}
The charged free bosons carry an 
anti-involution $\omega$ defined by\cite{Kv1987}
\begin{align}
\omega(\varphi_i)=\varphi^*_{-i},~~~~ \omega(\varphi^*_i)=\varphi_{-i}.
\end{align}
Subsequently one has that 
$\omega(h_n)=h_{-n}$.
\end{proposition}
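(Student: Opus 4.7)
The plan is to verify two things: (i) that $\omega$, defined on generators by the formulas above, extends to a well-defined anti-involution on the unital associative algebra generated by $\{\varphi_i,\varphi^*_i\}$, and (ii) that under this extension the images of the Heisenberg generators $h_n$ are computed directly. Since the algebra is presented by generators and the quadratic relations \eqref{B3}, for (i) it suffices to check that $\omega$, declared to reverse products, sends each defining relation to another identity already in the list, and that $\omega^2$ fixes each generator.

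For the relation check, I apply $\omega$ to $[\varphi_i,\varphi^*_j]=\delta_{i,-j}$, using $\omega(ab)=\omega(b)\omega(a)$, and compute
\begin{align*}
\omega\bigl(\varphi_i\varphi^*_j-\varphi^*_j\varphi_i\bigr)
=\varphi_{-j}\varphi^*_{-i}-\varphi^*_{-i}\varphi_{-j}
=[\varphi_{-j},\varphi^*_{-i}]=\delta_{-j,i},
\end{align*}
which agrees with $\omega(\delta_{i,-j})=\delta_{i,-j}$. The two vanishing-commutator relations are mapped, in the same way, to $[\varphi^*_{-j},\varphi^*_{-i}]=0$ and $[\varphi_{-j},\varphi_{-i}]=0$, both of which hold. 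Hence $\omega$ descends to the algebra. Involutivity is immediate on generators: $\omega^2(\varphi_i)=\omega(\varphi^*_{-i})=\varphi_{-(-i)}=\varphi_i$, and similarly for $\varphi^*_i$.

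For (ii) I apply $\omega$ term by term to the definition $h_n=\sum_{i\in\mathbb{Z}}\varphi_{-i}\varphi^*_{i+n}$. Using the anti-homomorphism property,
\begin{align*}
\omega(h_n)=\sum_{i\in\mathbb{Z}}\omega(\varphi^*_{i+n})\,\omega(\varphi_{-i})=\sum_{i\in\mathbb{Z}}\varphi_{-(i+n)}\varphi^*_{i}.
\end{align*}
Setting $j=i+n$ rewrites the right-hand side as $\sum_{j\in\mathbb{Z}}\varphi_{-j}\varphi^*_{j-n}=h_{-n}$, as desired. Throughout, the re-indexing is legitimate because on any given pair of vectors in $\mathcal{V}^*\times\mathcal{V}$ only finitely many summands in $h_n$ contribute (so $h_n$ is a well-defined operator), and moreover for $n\neq 0$ the two factors $\varphi_{-i}$ and $\varphi^*_{i+n}$ commute by \eqref{B3}, so no ordering ambiguity arises when $\omega$ reverses the product.

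The only obstacle is essentially bookkeeping: one has to keep track of sign/index conventions while reversing products, and one has to remark on the well-definedness of the infinite sum defining $h_n$. There is no deeper structural issue because the claim is a generator-level verification and the formula for $\omega(h_n)$ follows by a single substitution of summation index.
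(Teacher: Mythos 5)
Your proof is correct: the relation-preservation check, the involutivity check, and the index shift $j=i+n$ giving $\omega(h_n)=h_{-n}$ are all accurate, and your remark that $\varphi_{-i}$ and $\varphi^*_{i+n}$ commute for $n\neq 0$ (so reversing the product causes no ordering issue) is exactly the point that makes the computation clean. The paper offers no proof at all (it states "The following is clear"), and your argument is precisely the routine generator-level verification that is being left implicit.
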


\par A {\it partition} $\lambda=(\lambda_1,\lambda_2,\ldots,\lambda_l)$ of weight $|\lambda|=\sum_i\lambda_i$
is a set of weakly decreasing nonnegative integers. Non-zero $\lambda_i$ are called parts of $\lambda$, and the number of parts is the length of $\lambda$, denoted by $l(\lambda)$. Sometimes we also list the parts in ascending order:
$\lambda=(1^{m_1}2^{m_2}\cdots)$ and define
$z_{\lambda}=\prod_ii^{m_i}m_i!$. The $conjugate$ partition $\lambda^{\prime}$ is defined by
\begin{align}
\lambda^{\prime}_i=\text{Card}\{j:\lambda_j\geq i\}.
\end{align}
In particular, $\lambda^{\prime}_1=l(\lambda)$ and $|\lambda^{\prime}|=|\lambda|$. Pick the rectangle $[N\times M]$ containing the Young diagram of $\lambda$, i.e., $\lambda_1\leq M,~\lambda^{\prime}_1\leq N$, for which we often write $\lambda\subseteq [N\times M]$. In particular, $\lambda\subset[N\times\infty]$ means the set of partitions $\lambda$ with $l(\lambda)\leq N$. Let $\mathcal P$ be the
set of partitions. A partition $\mu=(\mu_{1},\dots,\mu_{l+1})$ is said to {\it interlace} the partition $\lambda=(\lambda_{1},\dots,\lambda_{l})$, written as
$\mu\succ \lambda$, if
\begin{align}
\mu_{i}\geq\lambda_{i}\geq\mu_{i+1}
\end{align}
for all $1\leq i\leq l$. As a result $\mu\geq\lambda$ in the dominance order.

Choose the normalization $\langle 0|1|0 \rangle=1$, and define the inner product of $x|0 \rangle,y|0 \rangle\in \mathcal{V}$ via
\begin{align}
(x|0 \rangle,y|0 \rangle)=\langle 0|\omega(x)y|0 \rangle,
\end{align}
and extend bilinearly to the whole space. Thus $(h_{-\lambda}|0 \rangle, h_{-\mu}|0 \rangle)=(-1)^{l(\lambda)}\delta_{\lambda,\mu}z_{\lambda}$,
where $h_{-\lambda}=h_{-\lambda_1}h_{-\lambda_2}\cdots$.

Let $\Lambda=\mathbb Q[x_1, x_2, \ldots ]^{S_{\infty}}$ be the ring of symmetric functions in the $x_n$.
For each integer $k\geq 0$, we define the $complete~symmetric~function$ $s_{k}(x)$ \cite{Mac1995} in infinitely many variables $x_1,x_2,\dots$ by the generating function
\begin{align*}
\sum_{k=0}^{\infty}s_{k}(x)z^{k}=\prod^{\infty}_{i=1}\frac{1}{1-x_iz}.
\end{align*}
\par For convenience, set $s_{-k}(x)=0$ for $k>0$. To each partition $\lambda$ we define the $Schur~function$ $s_\lambda(x)$ by the Jacobi-Trudi formula \cite{Mac1995}
\begin{align*}
s_{\lambda}(x)=\det\big{(}s_{\lambda_{i}-i+j}(x)\big{)}_{1\leq i,j\leq l(\lambda)}.
\end{align*}
It is well-known that 
$$\Lambda_{\mathbb Z}=\mathbb Z[s_1(x), s_2(x), \ldots]=\sum\limits_{\lambda\in\mathcal P}\mathbb Zs_{\lambda}(x).$$

\par For the rest of the paper, we denote by $s_\lambda\{x\}$ the Schur function
in finitely many variables $\{x\}=\{x_1,x_2,\cdots ,x_K\}$.
It is known that \cite[(3.1),(5.9),(5.11)]{Mac1995}
\begin{align}\label{bc3}
&s_\mu\{x\}=\sum_{\nu\prec \mu}s_\nu\{\bar{x}\}x^{|\mu|-|\nu|}_K, \\
\label{bc11}&s_\mu\{x\}=0, \qquad l(\mu)>K,
\end{align}
where  $\{x\}=\{x_1,\cdots, x_K\}, \{\bar{x}\}=\{x\}\backslash \{x_K\} $.

Introduce the vertex operator (cf. \cite{FLM} for general information)
\begin{align*}
\phi(z)=\sum_{i\in\mathbb{Z}}\phi_iz^{-i}=\phi^+(z)\phi^-(z^{-1})=
\exp\left(\sum^\infty_{n=1}\frac{h_{-n}}{n}z^n\right)\exp\left(\sum^\infty_{n=1}\frac{h_n}{n}z^{-n}\right),
\end{align*}
where $\phi^{\pm}(z)=\exp\left(\sum^\infty_{n=1}\frac{h_{\mp n}}{n}z^n\right)$. 
Then by Prop. \ref{e:inv}, 
\begin{align*}
\omega(\phi^{\pm}(z))=\phi^{\mp}(z), \qquad \omega(\phi(z))=\phi(z^{-1}),
\end{align*}
i.e., $\omega(\phi_i)=\phi_{-i}$. Clearly $[\phi^{\pm}(z), \phi^{\pm}(w)]=0$. It follows from direct vertex operator calculation that for $|zw|<1$
\begin{align}\label{e:3}
\phi^-(z)\phi^+(w)=(1-zw)\phi^+(w)\phi^-(z).
\end{align}

We also have
\begin{align}\label{e:ac1}
\langle 0|\phi_{-n}=\phi_n|0\rangle=0,~~n> 0.
\end{align}
For partition $\lambda=(\lambda_1,\lambda_2,\dots,\lambda_l)$, we denote $|\lambda\rangle=\phi_{-\lambda_1}\cdots\phi_{-\lambda_l}|0\rangle$ and $\langle\lambda|=\langle 0|\phi_{\lambda_l}\cdots\phi_{\lambda_1}$. We also define the element $\chi_m$ by the generating function
\begin{align}
\phi^+(z)=\exp\left(\sum^\infty_{n=1}\frac{h_{-n}}{n}z^n\right)=\sum_{m=0}^{\infty}\chi_mz^m.
\end{align}
And for partition $\lambda=(\lambda_1,\lambda_2,\cdots,\lambda_l)$, we define the Schur element $\chi_{\lambda}|0\rangle\in \mathcal V$:
\begin{align}\label{e:JTF2}
\chi_{\lambda}=\det(\chi_{\lambda_i-i+j})_{1\leq i,j\leq l}.
\end{align}
 Observe that $\chi_{\lambda}$ makes sense even if $\lambda$ is a composition. However $\chi_{\lambda}=0$ if $\lambda+\delta=(\lambda_1+l-1, \lambda_2+l-2, \dots, \lambda_l)$ has equal parts
by the determinant property \cite{JR2016}. If $\lambda=\sigma(\mu+\delta)-\delta$ for a partition $\mu$, then $\chi_{\lambda}=\varepsilon(\sigma)\chi_{\mu}$.

We remark that $|\lambda\rangle$ (or $\langle\lambda|$) are Schur basis elements in $\mathcal V$ (or $\mathcal V^*$). In fact,
$\Lambda\simeq \mathbb Q[h_{-1}, h_{-2}, \cdots]$ under
the map $s_n\mapsto \chi_{n}$ \cite{Jing1991, Jing2000}. Therefore $\Lambda_{\mathbb C}\simeq \mathcal V$ (or $\mathcal V^*$) under the identification, and $s_{\lambda} \simeq|\lambda\rangle$ (or $ \langle\lambda|$). For more details on the vertex operator approach to symmetric functions, see \cite{Jing1991}.
Nevertheless, the following discussion is independent from this identification or motivation.
\begin{proposition} One has that for $i, j\in\mathbb Z$
\begin{align}\label{eq14}
\phi_i\phi_j+\phi_{j+1}\phi_{i-1}=0.
\end{align}
\end{proposition}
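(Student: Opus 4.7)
The plan is to encode the two-term identity (\ref{eq14}) as a single operator identity at the level of the generating series $\phi(z)\phi(w)$ and $\phi(w)\phi(z)$, and then extract coefficients. Concretely, I will aim to prove
\begin{align*}
z\,\phi(z)\phi(w) + w\,\phi(w)\phi(z) = 0,
\end{align*}
from which the relation $\phi_i\phi_j + \phi_{j+1}\phi_{i-1}=0$ drops out by reading off the coefficient of $z^{-(i-1)}w^{-j}$ on each side.

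To establish this operator identity, I would first normal-order both products using the factorization $\phi(z)=\phi^+(z)\phi^-(z^{-1})$ together with the commutativity of the $\phi^{\pm}$ among themselves. The only non-trivial step is to push $\phi^-(z^{-1})$ past $\phi^+(w)$; substituting $z\mapsto z^{-1}$ in the contraction formula (\ref{e:3}) gives $\phi^-(z^{-1})\phi^+(w) = (1-w/z)\phi^+(w)\phi^-(z^{-1})$. Applying this (and its analogue with $z,w$ swapped) produces
\begin{align*}
\phi(z)\phi(w) &= (1-w/z)\,\phi^+(z)\phi^+(w)\phi^-(z^{-1})\phi^-(w^{-1}), \\
\phi(w)\phi(z) &= (1-z/w)\,\phi^+(z)\phi^+(w)\phi^-(z^{-1})\phi^-(w^{-1}).
\end{align*}
Multiplying the first identity by $z$, the second by $w$, and adding, the scalar prefactor becomes $(z-w)+(w-z)=0$, which yields the desired generating-series identity.

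Extracting the coefficient of $z^{-(i-1)}w^{-j}$ from $z\phi(z)\phi(w)+w\phi(w)\phi(z)$ directly produces the relation (\ref{eq14}), since $z\phi(z)\phi(w)$ contributes $\phi_i\phi_j$ and $w\phi(w)\phi(z)$ contributes $\phi_{j+1}\phi_{i-1}$ in that monomial.

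The one subtlety I would be careful about is that the two normal-ordering formulas above are, strictly speaking, the formal expansions associated with the regions $|w|<|z|$ and $|z|<|w|$ respectively. However, after multiplication by $z$ and $w$ the scalar factors $z(1-w/z)$ and $w(1-z/w)$ are genuine polynomials in $z,w$, so the sum is an honest bilateral formal Laurent series identity and the term-by-term coefficient extraction is legitimate. This is the only place where I expect a reader to want a line of justification; everything else is a short computation using Proposition~\ref{e:inv} and (\ref{e:3}).
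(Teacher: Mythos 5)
Your proposal is correct and follows essentially the same route as the paper: normal-order both products via the contraction \eqref{e:3}, observe that the scalar prefactors $z(1-w/z)$ and $w(1-z/w)$ cancel, and extract coefficients. Your closing remark about the two expansions living in different regions $|w|<|z|$ and $|z|<|w|$ is a useful justification that the paper leaves implicit.
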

\begin{proof} By \eqref{e:3} it follows that for $|z|>|w|$
\begin{align}\label{e:contr}
\phi(z)\phi(w)&=(1-\frac{w}{z})\phi^+(z)\phi^+(w)\phi^-(z^{-1})\phi^-(w^{-1})
\end{align}
thus
\begin{align}
z\phi(z)\phi(w)+w\phi(w)\phi(z)=0.
\end{align}
The proposition follows by taking the coefficients.
\end{proof}
\begin{proposition}\label{pro2} For each partition $\lambda$, one has that
\begin{equation}
|\lambda\rangle=\chi_{\lambda}|0\rangle, \qquad \langle\lambda|=\langle 0|\omega(\chi_{\lambda}).
\end{equation}
Moreover, 
$\{|\lambda\rangle\}_{\lambda\in\mathcal P}$ and $\{(-1)^{|\lambda|}|\lambda'\rangle\}_{\lambda\in\mathcal P}$ are (dual) bases of
$\mathcal V$ and $\mathcal V^*$ respectively, i.e.,
\begin{align}\label{e:ortho1}
\left(|\lambda\rangle,|\mu\rangle\right)=\langle \lambda|\mu\rangle=(-1)^{|\lambda|}\delta_{\lambda^{\prime},\mu}.
\end{align}
\end{proposition}
\begin{proof}
Using the method in \cite{Jing1991}, for any composition $\lambda=(\lambda_1,\dots,\lambda_l)\in \mathbb Z_+^l$, it follows from
\eqref{e:contr} and the Vandermonde determinant that
\begin{align}\notag
&\phi_{-\lambda_1}\cdots\phi_{-\lambda_l}|0\rangle=\text{Res}_{z}z^{-\lambda_1-1}_1\cdots z^{-\lambda_l-1}_l\phi(z_1)\cdots \phi(z_l)|0\rangle\\ \notag
=&\text{Res}_{z}z^{-\lambda_1-l}_1z^{-\lambda_2-l+1}_2\cdots z^{-\lambda_l-1}_l\prod_{1\leq i< j\leq l}(z_i-z_j)\exp\left(\sum^\infty_{n=1}\frac{z_1^n+\cdots+z_l^n}{n}h_{-n}\right)|0\rangle\\ \notag
=&\text{Res}_{z}z^{-\lambda_1-l}_1z^{-\lambda_2-l+1}_2\cdots z^{-\lambda_l-1}_l\sum_{\sigma\in S_l}\varepsilon(\sigma)z^{\sigma(l)-1}_1\cdots z^{\sigma(1)-1}_l\exp\left(\sum^\infty_{n=1}\frac{z_1^n+\cdots+z_l^n}{n}h_{-n}\right)|0\rangle\\ \notag
=&\text{Res}_{z}\sum_{\sigma\in S_l}\varepsilon(\sigma)z^{-\lambda_1+\sigma(l)-l-1}_1z^{-\lambda_2+\sigma(l-1)-l}_2\cdots z^{-\lambda_l+\sigma(1)-2}_l\exp\left(\sum^\infty_{n=1}\frac{z_1^n+\cdots+z_l^n}{n}h_{-n}\right)|0\rangle\\ \label{e:base1}
=&\sum_{\sigma\in S_l}\varepsilon(\sigma)\chi_{\lambda_1-\sigma(l)+l}\chi_{\lambda_2-\sigma(l-1)+l-1}\cdots \chi_{\lambda_l-\sigma(1)+1}|0\rangle
=\chi_\lambda|0\rangle,
\end{align}
where $\text{Res}_{z}f(z_1,\dots,z_l)$ denotes the coefficient of $z^{-1}_1\cdots z^{-1}_l$.

For two partitions $\lambda=(\lambda_1, \dots,\lambda_l)$ and $\mu=(\mu_1, \dots,\mu_k)$, we compute by using the Vandermonde
determinant in variables $z_1, \dots, z_{k+l}$:
\begin{align*}
&\left(|\lambda\rangle,|\mu\rangle\right)
=\langle 0|\phi_{\lambda_l}\phi_{\lambda_{l-1}}\cdots \phi_{\lambda_1}\phi_{-\mu_1}\phi_{-\mu_2}\cdots \phi_{-\mu_k}|0\rangle\\
=&\text{Res}_{z}z^{\lambda_l-1}_1z^{\lambda_{l-1}-1}_2\cdots z^{\lambda_1-1}_lz^{-\mu_1-1}_{l+1}z^{-\mu_2-1}_{l+2}\cdots z^{-\mu_k-1}_{l+k}\langle 0|\phi(z_1)\cdots \phi(z_{l+k})|0\rangle\\
=&\text{Res}_{z}z^{\lambda_l-1}_1z^{\lambda_{l-1}-1}_2\cdots z^{\lambda_1-1}_lz^{-\mu_1-1}_{l+1}z^{-\mu_2-1}_{l+2}\cdots z^{-\mu_k-1}_{l+k}\prod_{1\leq i< j\leq l+k}(1-\frac{z_j}{z_i})\\
=&\text{Res}_{z}\sum_{\sigma\in S_{l+k}}\varepsilon(\sigma)z^{\lambda_l+\sigma(l+k)-l-k-1}_1\cdots z^{\lambda_1+\sigma(k+1)-k-2}_l
z^{-\mu_1+\sigma(k)-k-1}_{l+1}\cdots z^{-\mu_k+\sigma(1)-2}_{l+k}\\
=&\varepsilon(\sigma)\delta_{\lambda_l,l+k-\sigma(l+k)}\cdots \delta_{\lambda_1,k+1-\sigma(k+1)}\delta_{\mu_1,\sigma(k)-k}\cdots \delta_{\mu_k,\sigma(1)-1}\\
=&\varepsilon(\sigma)\delta_{\sigma(l+k),k+l-\lambda_l}\cdots \delta_{\sigma(k+1),k+1-\lambda_1}\delta_{\sigma(k),\mu_1+k}\cdots \delta_{\sigma(1),\mu_k+1},
\end{align*}
for some $\sigma\in S_{k+l}$.
So the inner product is nonzero if and only if 
\begin{align}\label{e:2part}
\{k+l-\lambda_l,k+l-1-\lambda_{l-1},\dots,k+1-\lambda_1,\mu_1+k,\dots,\mu_k+1\}\overset{\sigma}\longleftrightarrow\{l+k,l+k-1,\dots,1\}.
\end{align}
\par We claim that \eqref{e:2part} implies that
$\lambda=\mu^{\prime}$ and $\varepsilon(\sigma)=(-1)^{|\lambda|}$. Assume \eqref{e:2part} holds.  By summing the elements in both sets we have that $|\lambda|=|\mu|$.
Also it is easily seen that $l\geq \mu_1$ and $k\geq \lambda_1$. Clearly $k+i-\lambda_i\neq \mu_j+k-j+1$ or
\begin{equation}\label{eq12}
\lambda_i+\mu_j\neq i+j-1
\end{equation}
for all $1\leq i\leq l, 1\leq j\leq k$.
 Suppose $\lambda\neq\mu^{\prime}$, then there exists $1\leq i\leq l$ such that
$\lambda_j=\mu^{\prime}_{j},~1\leq j\leq i-1$ and $\lambda_i\neq\mu^{\prime}_{i}$.
 If $\lambda_i>\mu^{\prime}_{i}$, then $\mu_{\lambda_i}=i-1$ by looking at the diagrams of $\lambda$ and $\mu'$, thus
$\lambda_i+\mu_{\lambda_i}=i+\lambda_i-1$,
which contradicts \eqref{eq12}.
If $\lambda_i<\mu^{\prime}_{i}$, then there exist $i<j$ such that $
\lambda_s\leq\mu^{\prime}_{s}$ for $i\leq s<j$ and $\lambda_j>\mu^{\prime}_{j}$ due to $|\lambda|=|\mu'|$.
Then $\mu_{\lambda_j}=j-1$, and $\lambda_j+\mu_{\lambda_j}=j+\lambda_j-1$,
which violates \eqref{eq12} again. Therefore $\lambda=\mu^{\prime}$.

Now we show that the coefficient of $z^{-\lambda_l}_1\cdots z^{-\lambda_1}_lz^{\mu_1}_{l+1}\cdots z^{\mu_k}_{l+k}$ in $\prod_{1\leq i< j\leq l+k}(1-\frac{z_j}{z_i})$ is $(-1)^{|\lambda|}$. First, since $\lambda_1=\mu^{\prime}_1=k$, for all $l+1\leq j\leq l+k$, $-\frac{z_{j}}{z_l}$
appear in the expansion, and for all $1\leq i\leq l-1$, $-\frac{z_l}{z_i}$ doesn't exist in the product. Note that
\begin{align*}
z^{-\lambda_l}_1\cdots z^{-\lambda_1}_lz^{\mu_1}_{l+1}\cdots z^{\mu_k}_{l+k}=z^{-\lambda_1}_lz_{l+1}z_{l+2}\cdots z_{l+k}z^{-\lambda_l}_1\cdots z^{-\lambda_2}_{l-1}z^{\mu_1-1}_{l+1}\cdots z^{\mu_{\mu^{\prime}_2}-1}_{l+\mu^{\prime}_2}.
\end{align*}
Similarly, $\lambda_2=\mu^{\prime}_2$ implies that for all $l+1\leq j\leq l+\mu^{\prime}_2$, $-\frac{z_j}{z_{l-1}}$ exist
in the product, and for all $1\leq i\leq l-2$, $-\frac{z_{l-1}}{z_i}$ don't appear in the expansion. Continuing the process, we see that the coefficient of $z^{-\lambda_l}_1\cdots z^{-\lambda_1}_lz^{\mu_1}_{l+1}\cdots z^{\mu_k}_{l+k}$ in $\prod_{1\leq i< j\leq l+k}(1-\frac{z_j}{z_i})$ is $(-1)^{|\lambda|}$.

Using \eqref{e:base1} and the remark after \eqref{e:JTF2}, $|\lambda\rangle$'s ($\lambda\in\mathcal P$) span the space $\mathcal V$ and the inner product result shows
that $\{|\lambda\rangle\}$ is a base of $\mathcal{V}$.
\end{proof}

We remark that the coefficient of $z_1^{\lambda_1}\cdots z_l^{\lambda_l}z_{l+1}^{-\lambda'_1}\cdots z_{l+k}^{-\lambda'_k}$ in the
product $\prod_{1\leq i< j\leq l+k}(1-\frac{z_j}{z_i})$ equals to the sign of the shuffle \eqref{e:2part}, which is
$(-1)^{|\lambda|}$. Also \eqref{e:2part} generalizes the well-known combinatorial fact in \cite[(1.7)]{Mac1995} and our proof
thus offers another algebraic one.


\begin{proposition}\label{pro1} For any partition $\lambda$,
one has that
\begin{align}
\label{e:1}&\phi^+(x)|\lambda\rangle=\sum_{\lambda\prec\mu\subseteq [(l(\lambda)+1)\times \infty]}x^{|\mu|-|\lambda|}|\mu\rangle,\\
&\langle\lambda| \phi^-(x)=\sum_{\lambda\prec\mu\subseteq [(l(\lambda)+1)\times\infty]}x^{|\mu|-|\lambda|}\langle\mu|.
\end{align}
\end{proposition}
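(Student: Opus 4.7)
The plan is to commute $\phi^+(x)$ past each $\phi_{-\lambda_i}$ in $|\lambda\rangle=\phi_{-\lambda_1}\cdots\phi_{-\lambda_l}|0\rangle$ using the Heisenberg relation \eqref{e:3}, expand the result in the composition basis, and then collect terms via the straightening rule noted after \eqref{e:JTF2}; the dual identity for $\langle\lambda|\phi^-(x)$ will then follow from the first by applying the anti-involution $\omega$ of Proposition \ref{e:inv}, which sends $\phi^+(x)\mapsto\phi^-(x)$, $|0\rangle\mapsto\langle 0|$, and reverses products.

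First I would derive from \eqref{e:3} the commutation $\phi^+(x)\phi(z)=(1-x/z)^{-1}\phi(z)\phi^+(x)$, and extract the coefficient of $z^k$ on both sides to obtain the operator identity $\phi^+(x)\phi_{-k}=\sum_{n\ge 0}x^n\phi_{-k-n}\phi^+(x)$. Iterating this through $\phi_{-\lambda_1},\dots,\phi_{-\lambda_l}$ and noting $\phi^+(x)|0\rangle=\sum_{m\ge 0}x^m\phi_{-m}|0\rangle$, one obtains
\begin{align*}
\phi^+(x)|\lambda\rangle=\sum_{n_1,\dots,n_{l+1}\ge 0}x^{n_1+\cdots+n_{l+1}}\,|\alpha^{(n)}\rangle,
\end{align*}
where $\alpha^{(n)}=(\lambda_1+n_1,\dots,\lambda_l+n_l,n_{l+1})$ is a composition of length $l+1$ with $\alpha_i\ge\widetilde\lambda_i$ (here $\widetilde\lambda=(\lambda_1,\dots,\lambda_l,0)$), and $|\alpha^{(n)}\rangle=\chi_{\alpha^{(n)}}|0\rangle$ by \eqref{e:base1}.

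Next, for each partition $\mu$ with $l(\mu)\le l+1$, I would collect its contribution using the determinantal sign rule for $\chi_\alpha$. A composition $\alpha$ of length $l+1$ satisfies $\chi_\alpha=\varepsilon(\pi)\chi_\mu$ precisely when $\alpha+\delta=\pi(\mu+\delta)$ for some $\pi\in S_{l+1}$, with $\delta=(l,l-1,\dots,0)$; equivalently $\alpha_i=\mu_{\pi(i)}+i-\pi(i)$, and in that case $|\alpha|=|\mu|$, so the exponent $x^{|\mu|-|\lambda|}$ is automatically correct. Setting $M_i:=\mu_i+(l+1-i)$ and $L_i:=\widetilde\lambda_i+(l+1-i)$ (both strictly decreasing), the admissibility $\alpha_i\ge\widetilde\lambda_i$ translates into $M_{\pi(i)}\ge L_i$, so the $|\mu\rangle$-coefficient in $\phi^+(x)|\lambda\rangle$ is the $0/1$ determinant
\begin{align*}
c_\mu=\det\!\bigl([\,M_j\ge L_i\,]\bigr)_{1\le i,j\le l+1}.
\end{align*}

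The hard part will be evaluating this determinant and showing $c_\mu=[\mu\succ\lambda]$. Since $M$ is strictly decreasing, row $i$ consists of $n_i:=\#\{j:M_j\ge L_i\}$ leading $1$'s followed by zeros, and the strict decrease of $L$ forces $n_i$ nondecreasing in $i$. A short linear-algebra observation then shows such a staircase matrix has determinant $0$ unless $n_i=i$ for every $i$: any repeat $n_i=n_{i+1}$ produces two identical rows, while $n_i<i$ creates a zero block larger than square. The condition $n_i=i$ unpacks to $M_i\ge L_i>M_{i+1}$, equivalently $\mu_i\ge\widetilde\lambda_i\ge\mu_{i+1}$ for all $i$, i.e.\ the interlacing $\mu\succ\lambda$. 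This signed cancellation is where the content of the proposition really lies, since a priori many different compositions $\alpha$ straighten to the same $\mu$, and it is the staircase-determinant identity that collapses the signed sum into the clean indicator $[\mu\succ\lambda]$.
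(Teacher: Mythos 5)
Your argument is correct, and although its first half coincides with the paper's (both commute $\phi^+(x)$ through via $\phi^+(x)\phi(z)=(1-x/z)^{-1}\phi(z)\phi^+(x)$ and arrive at the unrestricted sum over compositions $(\lambda_1+n_1,\dots,\lambda_l+n_l,n_{l+1})$ with all $n_i\ge 0$), the crucial cancellation is handled by a genuinely different mechanism. The paper stays entirely at the operator level: from the quadratic relation \eqref{eq14} it deduces $\sum_{i\geq m,\,j\geq m+1}\phi_{-i}\phi_{-j}x^{i+j+n}=0$ and uses this to trim each adjacent pair of factors as in \eqref{e:quad2}, so the surviving indices are forced directly into the interlacing range and no straightening of compositions is ever confronted. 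You instead push every composition into the Schur basis via the rule $\chi_\alpha=\varepsilon(\pi)\chi_\mu$ for $\alpha+\delta=\pi(\mu+\delta)$ (the remark after \eqref{e:JTF2}) and identify the signed multiplicity of $|\mu\rangle$ as the $0/1$ staircase determinant $\det\bigl([M_j\ge L_i]\bigr)$; your evaluation of it is sound, since the strict monotonicity of $M$ and $L$ makes the rows nested intervals of $1$'s, identical rows or an oversized zero block kill the determinant unless $n_i=i$ for all $i$, and $n_i=i$ unpacks exactly to $M_i\ge L_i>M_{i+1}$, i.e.\ $\mu_i\ge\lambda_i\ge\mu_{i+1}$. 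Your route costs a determinant evaluation but buys independence from the quadratic relation \eqref{eq14}, needing only \eqref{e:3}, \eqref{e:base1} and the straightening remark, and it makes fully explicit the signed collapse of the many compositions mapping to a given $\mu$; the paper's route is shorter once \eqref{eq14} is available and never has to account for that multiplicity. Both give the exponent $x^{|\mu|-|\lambda|}$ for free (your observation that $|\alpha|=|\mu|$ under straightening), and your deduction of the dual identity by applying the anti-involution $\omega$ is exactly how the paper intends $\langle\lambda|\phi^-(x)$ to follow.
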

\begin{proof} It follows from \eqref{e:3} that
\begin{align}
\phi^+(x)\phi(z)=(1-\frac{x}{z})^{-1}\phi(z)\phi^+(x).
\end{align}
\par Note that $\phi^+(x)|0\rangle=\phi(x)|0\rangle$, then 
\begin{align}\label{eq13}
\phi^+(x)\phi(z_1)\phi(z_2)\dots \phi(z_l)|0\rangle=\prod^l_{i=1}(1-\frac{x}{z_i})^{-1}\phi(z_1)\phi(z_2)\dots \phi(z_l)\phi(x)|0\rangle.
\end{align}

Now $\phi^+(x)|\lambda\rangle$ is the coefficient $C$ of $z^{\lambda_1}_1\cdots z^{\lambda_l}_l$ in \eqref{eq13}, and we claim that
\begin{align}\label{e:coeff1} 
C=
\sum_{n_1\geq 0,0\leq n_i\leq \lambda_{i-1}-\lambda_i,2\leq i\leq l+1}x^{n_1+\dots+n_l+n_{l+1}}\phi_{-\lambda_1-n_1}\phi_{-\lambda_2-n_2}\cdots\phi_{-\lambda_l-n_l}\phi_{-n_{l+1}}|0\rangle,
\end{align}
where $\lambda_{l+1}=0$.
By \eqref{eq14}, for any fixed $m,n\in\mathbb Z$ we have that\footnote{Due to the fact \eqref{e:ac1}, the action of $\sum_{i\geq m, j\geq m+1}\phi_{-i}\phi_{-j}x^{i+j+n}$ on $|\lambda\rangle$ is a finite sum.}
\begin{equation}\label{e:quad}
\sum_{i\geq m, j\geq m+1}\phi_{-i}\phi_{-j}x^{i+j+n}=0, \qquad \sum_{i\geq m, j\geq m-1}\phi_{i}\phi_{j}x^{-i-j+n}=0.
\end{equation}
For $\lambda_i\geq \lambda_j$, we have that
\begin{align}\notag
\sum_{n_i\geq 0, n_j\geq 0}\phi_{-\lambda_i-n_i}\phi_{-\lambda_j-n_j}x^{n_i+n_j}&=
\left(\sum_{n_i\geq 0, n_j\geq \lambda_i-\lambda_j+1}+\sum_{n_i\geq 0, 0\leq n_j\leq \lambda_i-\lambda_j}\right)\phi_{-\lambda_i-n_i}\phi_{-\lambda_j-n_j}x^{n_i+n_j}\\ \label{e:quad2}
&=\sum_{n_i\geq 0, 0\leq n_j\leq \lambda_i-\lambda_j}\phi_{-\lambda_i-n_i}\phi_{-\lambda_j-n_j}x^{n_i+n_j}.
\end{align}
In other words,
the first identity of \eqref{e:quad} can be used to trim the summation in \eqref{e:coeff1}.
By definition $C=\sum_{n_i\geq 0}x^{n_1+\cdots+n_{l+1}}\phi_{-\lambda_1-n_1}\cdots \phi_{-\lambda_{l+1}-n_{l+1}}|0\rangle$.
Successive application of \eqref{e:quad2} to the factors from right to left implies \eqref{e:coeff1}. The summation indices of
\eqref{e:coeff1} satisfy that
\begin{align}
\lambda_{i}\geq \lambda_{i+1}+n_{i+1},~~1\leq i\leq l\quad (\lambda_{l+1}=0).
\end{align}
Then $(\lambda_i+n_i)-(\lambda_{i+1}+n_{i+1})\geq n_i\geq 0$, and
$\mu=(\lambda_1+n_1,\dots,\lambda_l+n_l,n_{l+1})$ is a partition that interlaces $\lambda$: $\lambda\prec\mu$. On the other hand, given
$\lambda\prec\mu$, then $n_i=\mu_i-\lambda_i\geq 0$ corresponds to a term in \eqref{e:coeff1}. In summary we have shown that
\begin{align}
C=\sum_{\lambda\prec\mu}x^{|\mu|-|\lambda|}|\mu\rangle.
\end{align}
\end{proof}
\begin{proposition}\label{pro3}
One has the following equations:
\begin{align}
\label{eq5}&\phi^+(x_1)\phi^+(x_2)\dots \phi^+(x_K)|0\rangle=\sum_{\mu\subseteq [K\times\infty]}s_{\mu}\{x\}|\mu\rangle,\\
\label{eq6}&\langle 0|\phi^-(y_1)\phi^-(y_2)\dots \phi^-(y_N)=\sum_{\rho\subseteq [N\times\infty]}s_{\rho}\{y\}\langle \rho|,
\end{align}
where $\{x\}=\{x_1,\cdots, x_K\}$ and $\{y\}=\{y_1,\cdots, y_N\}$.
\end{proposition}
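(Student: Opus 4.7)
The plan is to prove \eqref{eq5} by induction on $N$, then deduce \eqref{eq6} by applying the anti-involution $\omega$.

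For the base case $N=1$, I apply Proposition \ref{pro1} with $\lambda=\emptyset$: the interlacing condition is vacuous, $l(\emptyset)+1=1$, so $\mu$ ranges over one-row partitions $(m)$ with $m\geq 0$, giving
\[
\phi^+(x_1)|0\rangle=\sum_{m\geq 0}x_1^m|(m)\rangle=\sum_{m\geq 0}s_{(m)}\{x_1\}|(m)\rangle,
\]
since $s_{(m)}\{x_1\}=x_1^m$.

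For the inductive step, I exploit the fact that the $\phi^+(x_i)$ mutually commute (noted after the definition of $\phi(z)$) to peel off $\phi^+(x_N)$ last:
\[
\phi^+(x_1)\cdots\phi^+(x_N)|0\rangle=\phi^+(x_N)\bigl(\phi^+(x_1)\cdots\phi^+(x_{N-1})|0\rangle\bigr).
\]
By the inductive hypothesis with $\{\bar{x}\}=\{x_1,\dots,x_{N-1}\}$, the parenthesized vector equals $\sum_{\nu\subseteq[N-1,\infty)}s_\nu\{\bar{x}\}|\nu\rangle$. Applying Proposition \ref{pro1} termwise yields
\[
\sum_{\nu}s_\nu\{\bar{x}\}\sum_{\nu\prec\mu\subseteq[l(\nu)+1,\infty)}x_N^{|\mu|-|\nu|}|\mu\rangle.
\]
Swapping the order of summation, the coefficient of $|\mu\rangle$ becomes $\sum_{\nu\prec\mu}s_\nu\{\bar{x}\}x_N^{|\mu|-|\nu|}$, which is precisely $s_\mu\{x\}$ by the branching formula \eqref{bc3}. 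The length restriction $\mu\subseteq[N,\infty)$ is automatic: from $l(\nu)\leq N-1$ together with $l(\mu)\leq l(\nu)+1$ we get $l(\mu)\leq N$, and larger $\nu$ make no contribution anyway by \eqref{bc11}.

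For \eqref{eq6}, I apply the anti-involution $\omega$ of Proposition \ref{e:inv} to \eqref{eq5}. Using $\omega(\phi^+(y))=\phi^-(y)$, the fact that $\omega$ reverses operator order, and that the $\phi^-(y_j)$ mutually commute, the operator $\phi^+(y_1)\cdots\phi^+(y_K)$ acting on $|0\rangle$ is sent to $\phi^-(y_K)\cdots\phi^-(y_1)$ acting on $\langle 0|$ from the right, which by commutativity equals $\langle 0|\phi^-(y_1)\cdots\phi^-(y_K)$. On the right-hand side each $|\mu\rangle=\chi_\mu|0\rangle$ (Proposition \ref{pro2}) is sent to $\langle 0|\omega(\chi_\mu)=\langle\mu|$, giving the claimed expansion. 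The only bookkeeping issue throughout is tracking the length constraint, but this is handled automatically by \eqref{bc11}, so no serious obstacle arises.
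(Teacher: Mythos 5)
Your proposal is correct and follows essentially the same route as the paper: induction on $N$ using the commutativity of the $\phi^+(x_i)$, the termwise application of Proposition \ref{pro1}, and the branching rule \eqref{bc3} to assemble $s_\mu\{x\}$ from $\sum_{\nu\prec\mu}s_\nu\{\bar{x}\}x_N^{|\mu|-|\nu|}$. Your explicit treatment of the base case and of \eqref{eq6} via the anti-involution $\omega$ merely fills in details the paper leaves implicit.
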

\begin{proof} We argue by induction on $K$. First \eqref{eq5} holds for $K=1$ by
\eqref{e:1} with $|\lambda\rangle=|0\rangle$.
Assume \eqref{eq5} holds for $K-1$, then for $\{x\}=\{x_1, \dots, x_K\}$ and $\{\bar{x}\}=\{x\}\backslash\{x_K\}$
\begin{align*}
\phi^+(x_1)\phi^+(x_2)\dots \phi^+(x_K)|0\rangle=&\phi^+(x_K)\phi^+(x_1)\dots \phi^+(x_{K-1})|0\rangle\\
=&\sum_{\nu\subseteq [(K-1)\times\infty]}s_{\nu}\{\bar{x}\}\phi^+(x_K)|\nu\rangle\\
=&\sum_{\nu\subseteq [(K-1)\times\infty]}s_{\nu}\{\bar{x}\}\sum_{\nu\prec\mu\subseteq [K,\infty)}x^{|\mu|-|\nu|}_K|\mu\rangle\\
=&\sum_{\mu\subseteq [K\times\infty]}s_{\mu}\{x\}|\mu\rangle,
\end{align*}
where we have used \eqref{bc3} in the last equation. 
\end{proof}
Combining Proposition \ref{pro2} with Proposition \ref{pro3}, we obtain the following:
\begin{corollary}\label{co2}
The correlation function $\langle 0|\phi^-(x_1)\phi^-(x_2)\dots \phi^-(x_K)\phi^+(y_1)\phi^+(y_2)\dots \phi^+(y_N)|0\rangle$ has the following two expressions
\begin{align}
&\langle 0|\phi^-(x_1)\phi^-(x_2)\dots \phi^-(x_K)\phi^+(y_1)\phi^+(y_2)\dots \phi^+(y_N)|0\rangle=\prod^K_{i=1}\prod^N_{j=1}(1-x_iy_j),\\
&\langle 0|\phi^-(x_1)\phi^-(x_2)\dots \phi^-(x_K)\phi^+(y_1)\phi^+(y_2)\dots \phi^+(y_N)|0\rangle=\sum_{\mu\subseteq [K\times N]}(-1)^{|\mu|}s_{\mu}\{x\}s_{\mu'}\{y\},
\end{align}
which immediately implies Cauchy's formula
\begin{align}
\label{eq9}\prod^K_{i=1}\prod^N_{j=1}(1-x_iy_j)=\sum_{\mu\subseteq [K\times N]}(-1)^{|\mu|}s_{\mu}\{x\}s_{\mu'}\{y\}.
\end{align}
\end{corollary}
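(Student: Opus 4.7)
The plan is to evaluate the correlation function
$$F=\langle 0|\phi^-(x_1)\phi^-(x_2)\cdots\phi^-(x_K)\phi^+(y_1)\phi^+(y_2)\cdots\phi^+(y_N)|0\rangle$$
in two different ways and equate the results.

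For the first expression, I would normal-order the operators using \eqref{e:3}. Since the $\phi^+$'s commute pairwise and so do the $\phi^-$'s, iterating $\phi^-(z)\phi^+(w)=(1-zw)\phi^+(w)\phi^-(z)$ allows one to move each $\phi^-(x_i)$ through every $\phi^+(y_j)$, producing the scalar prefactor $\prod_{i=1}^K\prod_{j=1}^N(1-x_iy_j)$. After this normal ordering, the remaining matrix element is $\langle 0|\phi^+(y_1)\cdots\phi^+(y_N)\phi^-(x_1)\cdots\phi^-(x_K)|0\rangle$, which collapses to $\langle 0|0\rangle=1$ by the vacuum annihilation relations $\phi^-(x)|0\rangle=|0\rangle$ and $\langle 0|\phi^+(y)=\langle 0|$ (which follow from $\phi^\pm(z)=\exp(\sum_{n\geq 1}\frac{h_{\mp n}}{n}z^n)$ together with $h_n|0\rangle=0$ and $\langle 0|h_{-n}=0$ for $n>0$).

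For the second expression, I would insert the two identities of Proposition \ref{pro3}. Applying \eqref{eq5} on the right and \eqref{eq6} on the left rewrites $F$ as the bilinear sum
\begin{align*}
F=\sum_{\nu\subseteq[K,\infty)}\ \sum_{\mu\subseteq[N,\infty)} s_\nu\{x\}\,s_\mu\{y\}\,\langle\nu|\mu\rangle.
\end{align*}
Proposition \ref{pro2} gives $\langle\nu|\mu\rangle=(-1)^{|\nu|}\delta_{\nu',\mu}$, so only the diagonal terms with $\mu=\nu'$ contribute. Renaming the surviving summation variable $\mu:=\nu'$ (so $\nu=\mu'$ and $|\nu|=|\mu|$), the constraint $l(\mu)=\nu'_1\leq N$ becomes $\mu_1\leq N$ directly, while $\nu\subseteq[K,\infty)$ becomes $\mu'_1=l(\mu)=\nu_1\leq K$. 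Wait — more cleanly: $l(\nu)\leq K$ translates to $\mu_1\leq K$ is wrong; the correct translation is $l(\mu)\leq K$ and $\mu_1\leq N$, which in the paper's rectangle notation is exactly $\mu\subseteq[K,N]$.

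Equating the two expressions for $F$ yields Cauchy's identity \eqref{eq9}. There is no real obstacle here, since Propositions \ref{pro2} and \ref{pro3} have already done the work; the proof is essentially a bookkeeping exercise. The only step that deserves care is translating the two range conditions $\nu\subseteq[K,\infty)$ and $\mu\subseteq[N,\infty)$ through the conjugation $\mu=\nu'$ into the single rectangle condition $\mu\subseteq[K,N]$, and noting that the sign $(-1)^{|\nu|}$ becomes $(-1)^{|\mu|}$ because conjugation preserves weight.
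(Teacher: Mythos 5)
Your proposal is correct and follows the same route as the paper: the first expression comes from iterating the normal-ordering relation \eqref{e:3} together with $\phi^-(x)|0\rangle=|0\rangle$ and $\langle 0|\phi^+(y)=\langle 0|$, and the second comes from inserting Propositions \ref{pro3} and \ref{pro2} and tracking how the two half-infinite rectangle conditions collapse under $\mu=\nu'$ to $\mu\subseteq[K,N]$. The paper itself gives no further detail beyond ``combine Propositions \ref{pro2} and \ref{pro3},'' so your write-up is, if anything, more explicit than the original.
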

Taking the limits $N\rightarrow \infty,~K\rightarrow \infty $, we obtain the Cauchy identity \cite{Mac1995}:
\begin{align}
\label{eq10}\prod^\infty_{i,j=1}(1-x_iy_j)=\sum_{\mu\in\mathcal P}(-1)^{|\mu|}s_{\mu}(x)s_{\mu'}(y).
\end{align}

\section{half plane partitions and Cauchy's identities}

A {\it half plane partition} $\pi$ is a set of finitely many nonzero integers $\pi(i,j)$ that are weakly bi-decreasing:
$\pi(i,j)\geq\pi(i+1,j), \pi(i,j)\geq\pi(i,j+1)$
for all $i\geq j\geq 1$ with the additional condition
\begin{align}
\pi(i,j)=0,~~i<j.
\end{align}
For convenience, one may add strings of zeros to $\pi(i, j)$ for $i\geq j\gg 0$.
The weight of $\pi$ is $|\pi|=\sum_{i,j\geq 1}\pi(i,j)$. The height $h(\pi)$ of $\pi$ is the maximal $i$ such that $\pi(i,1)>0$.
One also uses the notion of $tableau$, whereby the non-negative integer $\pi(i,j)$ is placed in
row $i$ and column $j$ for any $i\geq j\geq 1$.
\begin{figure}[H]
  \centering
\scalefont{0.8}
  \begin{tikzpicture}[scale=0.6]
   \coordinate (Origin)   at (0,0);
    \coordinate (XAxisMin) at (0,0);
    \coordinate (XAxisMax) at (3,0);
    \coordinate (YAxisMin) at (0,0);
    \coordinate (YAxisMax) at (0,4);
    \draw [thin, black] (0,0) -- (1,0);
    \draw [thin, black] (0,1) -- (3,1);
    \draw [thin, black] (0,2) -- (3,2);
    \draw [thin, black] (0,3) -- (2,3);
    \draw [thin, black] (0,4) -- (1,4);
    \draw [thin, black] (0,0) -- (0,4);
    \draw [thin, black] (1,0) -- (1,4);
    \draw [thin, black] (2,1) -- (2,3);
    \draw [thin, black] (3,1) -- (3,2);
    \node[inner sep=2pt] at (0.5,0.5) {$2$};\node[inner sep=2pt] at (0.5,1.5) {$3$}; \node[inner sep=2pt] at (1.5,1.5) {$2$}; \node[inner sep=2pt] at (2.5,1.5) {$1$};\node[inner sep=2pt] at (0.5,2.5) {$4$};\node[inner sep=2pt] at (1.5,2.5) {$3$};\node[inner sep=2pt] at (0.5,3.5) {$5$};
    \end{tikzpicture}
    \caption{Tableau representation of a half plane partition. e.g. the top and third rows correspond to the boxes $(1,1)$
     and $(3,1),~(3,2),~(3,3)$ respectively.}
\end{figure}
Let $\pi$ be a half plane partition. For $i\geq 0$, define the partition $\pi_i$, called  a {\it diagonal~slice} of $\pi$, with the parts given by
\begin{align}
(\pi_i)_j=\pi(j+i,j), \qquad j\geq 1.
\end{align}
For the half plane partition in Figure 1, the diagonal slices are given as follows:
\begin{align}\label{e:slice}
\pi_0=(5,3,1),~~\pi_1=(4,2),~~\pi_2=(3),~~\pi_3=(2),
\end{align}
where $|\pi|=5+3+1+4+2+3+2=20,~h(\pi)=4$.

A half plane partition can be considered as a lower triangular part of a plane partition\cite{Mac1995}. It is known that plane partitions and interlacing partitions are closely related. The following fact
was due to Okounkov and Reshetikhin \cite{OR2003} for general plane partitions.
\begin{lemma}\label{le1}
Let $\pi_i$ be the $diagonal~slices$ of the half plane partition $\pi$. Then one has
\begin{align}
\pi_i\succ \pi_{i+1},~~i\geq 0.
\end{align}
\end{lemma}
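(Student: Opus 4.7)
My plan is to reduce the interlacing relation $\pi_i \succ \pi_{i+1}$ to two chains of inequalities in the entries $\pi(a,b)$, each of which follows instantly from one of the two monotonicity properties of a half plane partition. First I would unpack the definitions: by construction $(\pi_i)_j = \pi(i+j,j)$, so $(\pi_{i+1})_j = \pi(i+j+1,j)$ and $(\pi_i)_{j+1} = \pi(i+j+1,j+1)$. The interlacing condition $(\pi_i)_j \geq (\pi_{i+1})_j \geq (\pi_i)_{j+1}$ then reads
\begin{align*}
\pi(i+j,j) \geq \pi(i+j+1,j) \geq \pi(i+j+1,j+1).
\end{align*}
The left inequality is the row-decreasing property $\pi(a,b) \geq \pi(a+1,b)$ at $(a,b) = (i+j,j)$, while the right inequality is the column-decreasing property $\pi(a,b) \geq \pi(a,b+1)$ at $(a,b) = (i+j+1,j)$.

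Before asserting this I would briefly verify that each $\pi_i$ is itself a partition, i.e., $(\pi_i)_j \geq (\pi_i)_{j+1}$, which is obtained by chaining the same two monotonicity conditions:
\begin{align*}
\pi(i+j,j) \geq \pi(i+j+1,j) \geq \pi(i+j+1,j+1).
\end{align*}
To match the length convention in the paper's definition of $\mu \succ \lambda$, I would also note that $(\pi_{i+1})_j > 0$ forces $(\pi_i)_j \geq (\pi_{i+1})_j > 0$, so $l(\pi_i) \geq l(\pi_{i+1})$; any missing entries are interpreted as zero, which makes the trailing inequalities in the interlacing chain hold vacuously.

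There is essentially no technical obstacle here beyond bookkeeping of indices; the content of the lemma is the observation that the \emph{diagonal} direction is precisely the one that converts two-dimensional monotonicity of $\pi$ into one-dimensional interlacing of partitions. This is what allows the diagonal slicing to match the chains of interlacing partitions produced by repeated application of $\phi^+(x)$ in Proposition \ref{pro1}, which will be the crucial step when the lemma is invoked in Section 3 to turn the vertex operator identities into generating function identities for half plane partitions.
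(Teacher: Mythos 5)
Your proof is correct and complete: the two inequalities $\pi(i+j,j)\geq\pi(i+j+1,j)\geq\pi(i+j+1,j+1)$ are exactly the row- and column-monotonicity of $\pi$ read along adjacent diagonals, and your handling of lengths and trailing zeros is sound. The paper itself omits the argument (citing Okounkov--Reshetikhin), but this direct unpacking of the definitions is precisely the intended proof.
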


If $\lambda$ is a partition, an {\it interlacing partition chain} of $\lambda$ is a series of partitions starting from
 $\lambda$ and ending at $\emptyset$:
 \begin{align*}
 \emptyset=\lambda^{(n)}\prec\dots \prec \lambda^{(1)}\prec \lambda^{(0)}=\lambda.
 \end{align*}
 For a partition $\lambda$, let $\{\lambda\rightarrow T\}$ be the set of all interlacing partition chains $T$ of $\lambda$.
By Lemma \ref{le1} each half plane partition $\pi$ canonically gives rise to
an interlacing partition chain of $\pi_0$.
For example, the interlacing partition chain of $\pi_0=(5,3,1)$ in Figure 1 is
\begin{align}
\emptyset=\pi_4\prec\pi_3=(2)\prec\pi_2=(3)\prec\pi_1=(4,2)\prec \pi_0=(5,3,1).
\end{align}
Denote by $\{\lambda\rightarrow \pi\}$ the set of half plane partitions initiating at $\pi_0=\lambda$, and
$\{\lambda\rightarrow \pi\}_n$ the set of half plane partitions with $\pi_0=\lambda,~\pi_n=\emptyset$. Thus we have
\begin{align}
\{\lambda\rightarrow \pi\}_1\subseteq \{\lambda\rightarrow \pi\}_2\subseteq\{\lambda\rightarrow \pi\}_3\subseteq\dots
\end{align}
and $\lim_{n\rightarrow\infty}\{\lambda\rightarrow \pi\}_n=\{\lambda\rightarrow \pi\}$.
\par Proposition \ref{pro1} gives that
\begin{align}
\label{eq7}&\phi^+(y_1)\dots \phi^+(y_N)|0\rangle=\sum_{\emptyset=\check{\pi}_N\prec\dots \prec\check{\pi}_0=\mu\subseteq [N\times\infty]}\prod^N_{i=1}y_i^{|\check{\pi}_{N-i}|-|\check{\pi}_{N-i+1}|}|\mu\rangle,\\
\label{eq8}&\langle 0|\phi^-(x_1)\dots \phi^-(x_K)=\sum_{\emptyset=\pi_K\prec\dots \prec\pi_0=\lambda\subseteq [K\times\infty]}\prod^K_{i=1}x_i^{|\pi_{K-i}|-|\pi_{K-i+1}|}\langle\lambda|.
\end{align}
Taking the $q$-specialization $x_i=q^{K-i+1}$ and $y_j=q^{N-j+1}$, one has
\begin{align}
\notag&\prod^K_{i=1}x_i^{|\pi_{K-i}|-|\pi_{K-i+1}|}=q^{|\pi_0|-|\pi_1|}q^{2(|\pi_1|-|\pi_2|)}\dots q^{K(|\pi_{K-1}|-|\pi_K|)}=q^{|\pi_0|+|\pi_1|+\dots+|\pi_K|}=q^{|\pi|},\\
\notag&\prod^N_{i=1}y_i^{|\check{\pi}_{N-i}|-|\check{\pi}_{N-i+1}|}=q^{|\check{\pi}_0|-|\check{\pi}_1|}q^{2(|\check{\pi}_1|-|\check{\pi}_2|)}\dots q^{N(|\check{\pi}_{N-1}|-|\check{\pi}_N|)}=q^{|\check{\pi}_0|+|\check{\pi}_1|+\dots+|\check{\pi}_N|}=q^{|\check{\pi}|},\\
\label{e:com1}&\prod^K_{i=1}\prod^N_{j=1}(1-x_iy_j)=\prod^K_{i=1}\prod^N_{j=1}(1-q^{K+N-i-j+2})=\prod^K_{i=1}\prod^N_{j=1}(1-q^{i+j}),
\end{align}
where $\pi_K=\emptyset,~\check{\pi}_N=\emptyset$. Then by \eqref{e:ortho1}
\begin{align}
\notag\langle 0|\phi^-(x_1)\dots \phi^-(x_K)\phi^+(y_1)\dots \phi^+(y_N)|0\rangle=&\sum_{\lambda\subseteq [K\times\infty]}\sum_{\{\lambda\rightarrow \pi\}_K}q^{|\pi|}\sum_{\mu\subseteq [N\times\infty]}\sum_{\{\mu\rightarrow \check{\pi}\}_N}q^{|\check{\pi}|}\langle\lambda|\mu\rangle\\
=&\sum_{\lambda\subseteq [K\times N]}(-1)^{|\lambda|}\sum_{\{\lambda\rightarrow \pi\}_K}\sum_{\{\lambda^{\prime}\rightarrow \check{\pi}\}_N}q^{|\pi|+|\check{\pi}|},
\end{align}
where $\lambda^{\prime}$ is the conjugate of $\lambda$. Taking the limit $K,N\rightarrow \infty$, we have the following result.
\begin{proposition} One has the identity:
\begin{align}
\sum_{\lambda\in\mathcal P}(-1)^{|\lambda|}\sum_{\{\lambda\rightarrow \pi\}}\sum_{\{\lambda^{\prime}\rightarrow \check{\pi}\}}q^{|\pi|+|\check{\pi}|}=\prod^\infty_{i=1}(1-q^i)^{i-1},
\end{align}
where $\{\lambda\rightarrow \pi\}$ (resp. $\{\lambda^{\prime}\rightarrow \check{\pi}\}$)
runs through all half plane partitions $\pi$ (resp. $\check{\pi}$) starting at the partition $\lambda$ (resp. $\lambda^{\prime}$).
\end{proposition}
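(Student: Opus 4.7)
The plan is to obtain the identity simply as the $K, N \to \infty$ limit of the truncated Cauchy identity derived immediately above the proposition statement, namely
\begin{align*}
\prod^K_{i=1}\prod^N_{j=1}(1-q^{i+j}) = \sum_{\lambda\subseteq [N,K]}(-1)^{|\lambda|}\sum_{\{\lambda\to \pi\}_N}\sum_{\{\lambda'\to \pi'\}_K}q^{|\pi|+|\pi'|}.
\end{align*}
Both sides will be viewed as formal power series in $q$.

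First I would treat the left-hand side by grouping the factors according to the value of $k = i+j$. For each fixed $k \geq 2$, the number of pairs $(i,j)$ with $1 \leq i \leq K$, $1 \leq j \leq N$, $i+j = k$ stabilizes to $k-1$ once $K, N \geq k-1$. Consequently
\begin{align*}
\lim_{K,N\to\infty}\prod^K_{i=1}\prod^N_{j=1}(1-q^{i+j}) = \prod_{k\geq 2}(1-q^k)^{k-1} = \prod_{i\geq 1}(1-q^i)^{i-1},
\end{align*}
where the $i=1$ factor contributes trivially.

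Next I would handle the right-hand side. Using the already-noted inclusions $\{\lambda\to\pi\}_1 \subseteq \{\lambda\to\pi\}_2 \subseteq \cdots$ with union $\{\lambda\to\pi\}$, and similarly for the primed chains, together with the fact that the constraint $\lambda \subseteq [N,K]$ becomes vacuous as $N, K \to \infty$, the outer sum extends over all $\lambda \in \mathcal{P}$ and the inner sums become sums over all half plane partitions initiating at $\lambda$ (resp.\ $\lambda'$).

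The only mildly delicate point is the justification of these limits as formal power series. For each fixed power $q^n$, only half plane partitions with $|\pi|+|\pi'|\leq n$ contribute; any such $\pi$ has bounded diagonal length and bounded height, so it already lies in $\{\lambda\to\pi\}_N$ and $\lambda$ fits in $[N,K]$ once $N, K \geq n$. Thus the coefficient of $q^n$ on both sides stabilizes, and passing to the limit yields the desired identity.
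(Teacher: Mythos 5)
Your proposal is correct and follows essentially the same route as the paper: the paper's proof consists precisely of the finite identity derived just before the proposition together with the assertion ``taking the limit $K,N\to\infty$.'' You merely add the (routine but worthwhile) verifications that the product $\prod_{i,j}(1-q^{i+j})$ converges factor-by-factor to $\prod_{i\ge1}(1-q^i)^{i-1}$ and that the coefficient of each $q^n$ on the combinatorial side stabilizes, details the paper leaves implicit.
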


\bigskip
\centerline{\bf Acknowledgments}
We would like to thank the anonymous referees for helpful comments which have made the paper more readable.
The work is partially supported by
Simons Foundation grant no. 523868 and NSFC grant no. 12171303.
\bigskip

\end{document}